\newcommand*\patchAmsMathEnvironmentForLineno[1]{%
  \expandafter\let\csname old#1\expandafter\endcsname\csname #1\endcsname
  \expandafter\let\csname oldend#1\expandafter\endcsname\csname end#1\endcsname
  \renewenvironment{#1}%
     {\linenomath\csname old#1\endcsname}%
     {\csname oldend#1\endcsname\endlinenomath}}%
\newcommand*\patchBothAmsMathEnvironmentsForLineno[1]{%
  \patchAmsMathEnvironmentForLineno{#1}%
  \patchAmsMathEnvironmentForLineno{#1*}}%
\newtheorem{theorem}{Theorem}[section]
\newtheorem{problem}{Problem}
\newtheorem{corollary}[theorem]{Corollary}
\newtheorem{lemma}[theorem]{Lemma}
\newtheorem{remark}[theorem]{Remark}
\newcommand{\Z}{\mbox{${\mathbb Z}$}}
\title{Zero-sum $K_m$ over $\Z$ and the story of $K_4$}
\begin{document}

\maketitle

\begin{center}

\begin{multicols}{2}

Yair Caro\\[1ex]
{\small Dept. of Mathematics and Physics\\
University of Haifa-Oranim\\
Tivon 36006, Israel\\
yacaro@kvgeva.org.il}

\columnbreak

Adriana Hansberg\\[1ex]
{\small Instituto de Matem\'aticas\\
UNAM Juriquilla\\
Quer\'etaro, Mexico\\
ahansberg@im.unam.mx}\\[2ex]

\end{multicols}

Amanda Montejano\\[1ex]
{\small UMDI, Facultad de Ciencias\\
UNAM Juriquilla\\
Quer\'etaro, Mexico\\
amandamontejano@ciencias.unam.mx}\\[4ex]

\end{center}

\begin{abstract}
We prove the following results solving  a problem raised in Caro-Yuster \cite{CY3}.
For a positive integer $m\geq 2$, $m\neq 4$, there are infinitely many values of $n$ such that the following  holds: There is a weighting function  $f:E(K_n)\to \{-1,1\}$ (and hence a  weighting function  $f: E(K_n)\to \{-1,0,1\}$), such that $\sum_{e\in E(K_n)}f(e)=0$ but, for every copy $H$ of $K_m$ in $K_n$, $\sum_{e\in E(H)}f(e)\neq 0$. On the other hand, for every integer $n\geq 5$  and every weighting function $f:E(K_n)\to \{-1,1\}$ such that $|\sum_{e\in E(K_n)}f(e)|\leq \binom{n}{2}-h(n)$, where $h(n)=2(n+1)$ if $n \equiv 0$ (mod $4$) and $h(n)=2n$ if $n \not\equiv 0$ (mod $4$), there is always a copy $H$ of $K_4$ in $K_n$ for which $\sum_{e\in E(H)}f(e)=0$, and the value of $h(n)$ is sharp.  

 
 
\end{abstract}

\section{Introduction}

Our main source of motivation is a recent paper of Caro and Yuster \cite{CY3}, extending classical zero-sum Ramsey theory to weighting functions $f:E(K_n)\to \{-r,-r+1, \cdots ,0, \cdots , r-1,r\}$  seeking zero-sum copies of a given graph $H$ subject to the obviously necessary condition that $|\sum_{e\in E(K_n)}f(e)|$ is bounded away from $ r\binom{n}{2}$, or  even in the extreme case where $|\sum_{e\in E(K_n)}f(e)|=0$.
 
In zero-sum Ramsey theory, one studies functions $f:E(K_n)\to X$, where $X$ is usually the cyclic group $\Z_k$ or (less often) an arbitrary finite abelian group. The goal is to show that, under some necessary divisibility conditions imposed on the number of the edges $e(H)$ of a graph $H$ and for sufficiently  large $n$, there is always a zero-sum copy of $H$, where by a zero-sum copy of $H$ we mean a copy of $H$ in $K_n$ for which $\sum_{e\in E(H)}f(e)=0$ (where $0$  is the neutral element of $X$).  For several results concerning zero-sum Ramsey theory for graphs see \cite{AC,BD1,BD2,C1,C2,CY1,FK,SS},
for zero-sum Ramsey problems concerning matrices and  linear algebraic techniques see \cite{BCRY,CY2,WW,W}.
 
The following  notation was introduced in \cite{CY3} and the following  zero-sum problems over $\Z$ 
were considered. For positive integers $r$ and $q$, an \emph{$(r,q)$-weighting} of the edges of the complete graph $K_n$ is a function $f:E(K_n)\to \{-r, \cdots ,r\}$  such that  $|\sum_{e\in E(K_n)}f(e)|<q$. The general problem considered in \cite{CY3} is to find nontrivial conditions on the $(r,q)$-weightings that guarantee the existence of certain bounded-weight subgraphs and even zero-weighted subgraphs (also called \emph{zero-sum} subgraphs). So, given a subgraph $H$ of $K_n$, and a weighting  $f:E(K_n)\to \{-r,\cdots ,r\}$, the \emph{weight} of $H$ is defined as $w(H)=\sum_{e\in E(H)}f(e)$, and we say that $H$ is a \emph{zero-sum graph} if $w(H)=0$. Finally, we say that a weighting function $f:E(K_n) \to \{-1,1\}$ is \emph{zero-sum-$H$ free} if it contains no zero-sum copy of $H$.

Among the many results proved in \cite{CY3}, the following theorem and open problem are the main motivation of this paper.
 
\begin{theorem}[Caro and Yuster, \cite{CY3}]\label{thm:CY}
For a real $\epsilon >0$ the following holds. For $n$ sufficiently large, any weighting $f:E(K_n)\to \{-1,0,1\}$ with $|\sum_{e\in E(K_n)}f(e)|\leq (1-\epsilon)n^2/6$ contains a zero-sum copy of $K_4$.
On the other hand, for any positive integer $m$ which is not of the form $m = 4d^2$, there are infinitely many integers $n$ for which there is a weighting $f:E(K_n)\to \{-1,0,1\}$ with $\sum_{e\in E(K_n)}f(e)=0$ without a zero-sum copy of $K_m$.  
\end{theorem}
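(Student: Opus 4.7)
My plan is a double-counting argument followed by a concentration or switching step. Write $S:=\sum_{e}f(e)$, so $|S|\le(1-\epsilon)n^2/6$. Since every edge lies in $\binom{n-2}{2}$ copies of $K_4$, the mean weight of a $K_4$-copy is
\[
\mu \;=\; \frac{1}{\binom{n}{4}}\sum_{H\cong K_4}w(H) \;=\; \frac{12\,S}{n(n-1)},\qquad\text{so}\quad |\mu|\le 2(1-\epsilon)+o(1).
\]
Because each $w(H)$ is an integer in $[-6,6]$ and $|\mu|$ is bounded away from $2$, a positive-density fraction of $K_4$-copies must satisfy $|w(H)|\le 1$. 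To convert such a ``near-zero'' copy into an \emph{exact} zero-sum copy I would either (a) compute $\sum_H w(H)^2$ by splitting edge-pairs into vertex-sharing versus vertex-disjoint contributions and run a Chebyshev-type concentration, or (b) apply a local switching: starting from some $H$ with $w(H)=\pm 1$, swap one of its edges with a suitable edge of a neighboring $K_4$ (of which there are many, by the near-zero abundance) to land on weight exactly $0$.

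\textbf{Part 2 (constructions for $m\neq 4d^2$).} I would split on $m\pmod 4$ and on whether $m$ is a perfect square. \emph{Case (i): $\binom{m}{2}$ odd}, i.e.\ $m\equiv 2,3\pmod 4$. Any balanced $\pm 1$-weighting of $K_n$ with $n\equiv 0,1\pmod 4$ gives $w(K_m)\equiv\binom{m}{2}\equiv 1\pmod 2$, so no $K_m$ is zero-sum, and infinitely many such $n$ exist. \emph{Case (ii): $\binom{m}{2}$ even and $m$ a non-square}. Take $n$ a perfect square and partition $V(K_n)=A\sqcup B$ with $a:=|A|$, $b:=|B|$ and $|a-b|=\sqrt{n}$; set $+1$ on intra-part edges and $-1$ on cross edges. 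A direct computation gives $\sum f(e)=((a-b)^2-n)/2=0$, and the equation $w(K_m)=0$ for a $K_m$ using $k$ vertices in $A$ becomes $k(m-k)=m(m-1)/4$, whose roots $(m\pm\sqrt{m})/2$ are non-integer when $\sqrt{m}\notin\mathbb{Z}$. \emph{Case (iii): $m=(2d+1)^2$ is an odd square}. Here the bipartite construction fails (the roots become integer), and I would try a tailored construction --- e.g.\ a two-clique partition $V=C_1\sqcup C_2$ with $|C_1|\neq|C_2|$ yielding a different Diophantine constraint, a three-part split, or a blow-up of a small base weighting enriched by a few zero-weight edges --- chosen so that the integer equation arising from $w(K_m)=0$ has no admissible solution.

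\textbf{Main obstacle.} In Part 1 the real difficulty is bridging ``small average weight'' and ``some copy has weight \emph{exactly} zero''; because $0$ is a permitted edge-weight there is no parity invariant, so the argument must genuinely produce a zero and not just a near-zero, which is why the threshold $(1-\epsilon)n^2/6$ --- rather than a sharp constant --- is what's reachable. In Part 2 the delicate case is (iii): the exact shape of the exclusion $m=4d^2$, rather than the more natural ``$m$ a square'', already signals that odd squares $m=9,25,49,\ldots$ require a construction qualitatively different from the bipartite split, and producing such a family uniformly for infinitely many $n$ while maintaining $\sum f(e)=0$ is where the bulk of the effort will go.
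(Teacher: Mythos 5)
This statement is Theorem~\ref{thm:CY}, which the paper quotes from Caro--Yuster \cite{CY3} and does not prove, so there is no internal proof to measure you against; judged on its own terms, your sketch has genuine gaps in both halves. In Part 1, the pivotal step ``$|\mu|$ bounded away from $2$ implies a positive-density fraction of copies with $|w(H)|\le 1$'' is a non sequitur: the first moment only controls a signed average, and nothing prevents every copy from having weight exactly $+2$ or $-2$ (with mean $0$, say). So the averaging computation by itself yields nothing, and the two repair options you list --- a second-moment/Chebyshev argument or a local switching --- are precisely where the theorem lives and are not carried out. The switching as described is not even an available move: a copy of $K_4$ is determined by its vertex set, so one cannot ``swap one of its edges with an edge of a neighboring $K_4$''; the natural operation is a vertex exchange, which changes $w(H)$ by a sum of three edge-weight differences and need not step from $\pm1$ to $0$. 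Your closing remark correctly identifies the difficulty (no parity invariant once $0$ is allowed), but identifying it is not bridging it; as it stands Part 1 proves nothing beyond $|\mu|\le 2(1-\epsilon)+o(1)$.

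In Part 2, cases (i) and (ii) are correct (your case (ii) is essentially the two-clique construction this paper re-uses in Lemma~\ref{lem:bal2} for the harder $\{-1,1\}$ setting), but case (iii), the odd squares $m=9,25,49,\dots$, is exactly the residual class that makes the exclusion ``$m=4d^2$'' rather than ``$m$ a square'', and you leave it as a list of untested ideas. Once the weight $0$ is permitted there is in fact a one-line construction you missed: take $n=2a$, a partition $V=A\cup B$ with $|A|=|B|=a$, weight $+1$ inside $A$, $-1$ inside $B$, and $0$ on cross edges. The total weight is $\binom{a}{2}-\binom{a}{2}=0$, while a $K_m$ with $k$ vertices in $A$ has weight $\binom{k}{2}-\binom{m-k}{2}=\tfrac{1}{2}(2k-m)(m-1)$, which vanishes only when $m=2k$ is even; hence every odd $m$, in particular every odd square, is covered, and together with your case (ii) (every even $m$ that is not $4d^2$ is a non-square, since an even square is automatically $4d^2$) this would complete Part 2. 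Without that (or some other concrete construction) your Part 2 is incomplete precisely in the case that carries the arithmetic content. For perspective: the present paper's Section~\ref{sec:Kk} treats the variant where $0$-weights are forbidden, and there the square case genuinely requires the Pell-equation sets $S_1,S_2$ and Ljunggren's equation --- an indication of how much your vague ``tailored construction'' would have to deliver if you restricted yourself to $\pm1$ weights.
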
 

The authors posed the following complementary  problem:

\begin{problem}[Caro and Yuster, \cite{CY3}]\label{probl}
For an integer $m = 4d^2$, is it true that, for $n$ sufficiently large, any weighting $f:E(K_n)\to \{-1,0,1\}$  with $\sum_{e\in E(K_n)}f(e)=0$ contains a zero-sum copy of $K_m$?
\end{problem}

The main result in this paper is a negative answer to the above problem for any $m \geq 2$ except for $m=4$ already for weightings of the form $f:E(K_n)\to \{-1,1\}$ with $\sum_{e\in E(K_n)}f(e)=0$. On the other hand, concerning the study of the existence of zero-sum copies of $K_4$,  we prove a result analogous to Theorem \ref{thm:CY} where the range $\{-1,1\}$ instead of $\{-1,0,1\}$ is considered. Finally, we show that Theorem \ref{thm:CY} can neither be extended to wider ranges. To be more precise, we gather our results in the following theorem.

\begin{theorem}\label{thm:main} \hspace{1cm}
\begin{enumerate}

\item For any positive integer $m \geq2$, $m\neq 4$, there are infinitely many values of $n$ such that the following  holds: There is a weighting function $f:E(K_n)\to \{-1,1\}$ with $\sum_{e\in E(K_n)}f(e)=0$ which is zero-sum-$K_m$ free.  

\item Let $n$ be an integer such that $n\geq 5$. Define $g(n)=2(n+1)$ if $n \equiv 0$ (mod $4$) and $g(n)=2n$ if $n \not\equiv 0$ (mod $4$). Then, for any weighting $f:E(K_n)\to \{-1,1\}$ such that $|\sum_{e\in E(K_n)}f(e)|\leq \binom{n}{2}-g(n)$, there is a zero-sum copy of $K_4$. 

\item There are infinitely many values of $n$ such that the following holds:
There is a weighting function $f:E(K_n)\to \{-2,-1,0,1,2\}$ with $\sum_{e\in E(K_n)}f(e)=0$  which is zero-sum-$K_4$ free.

\end{enumerate}
\end{theorem}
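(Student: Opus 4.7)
\textbf{Proof proposal for Theorem~\ref{thm:main}.}

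The three parts are essentially independent and call for different techniques.

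For Part~(1), I would split on the parity of $\binom{m}{2}$. If $m\equiv 2,3\pmod 4$, then $\binom{m}{2}$ is odd, so under any $f:E(K_n)\to\{-1,1\}$ every $K_m$ has odd (and hence nonzero) weight; thus any $f$ with $\sum f=0$ is zero-sum-$K_m$ free, and such $f$ exists whenever $n\equiv 0,1\pmod 4$, giving infinitely many values of $n$. For $m\equiv 0,1\pmod 4$ with $m\neq 4$, I would use a bipartite construction: for $n=t^2$, partition $V(K_n)$ into $V_1,V_2$ with $|V_1|=(n+t)/2$ and $|V_2|=(n-t)/2$, and put $f\equiv +1$ within parts, $f\equiv -1$ between. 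A direct computation gives $\sum f = \frac{(|V_1|-|V_2|)^2-n}{2}=0$, and a $K_m$ with split $(i,j)$ has weight $\frac{(i-j)^2-m}{2}$, which vanishes only when $m$ is a perfect square. This settles every non-square $m\neq 4$. For $m$ a perfect square with $m\geq 9$ (in particular the new cases $m=4d^2$ with $d\geq 2$, left open by Caro--Yuster), one must refine the construction: either via a multipartite analogue of the same formula or by a controlled perturbation of the bipartite weighting that kills the offending zero-weight splits $|i-j|=\sqrt{m}$ while preserving $\sum f = 0$. Producing such weightings for infinitely many $n$ in the square case is the main obstacle of Part~(1).

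For Part~(2), set $G=\{e:f(e)=+1\}$, so $\sigma:=\sum f = 2|E(G)|-\binom{n}{2}$ and the hypothesis $|\sigma|\leq\binom{n}{2}-g(n)$ is equivalent to $\min\{|E(G)|,|E(\bar G)|\}\geq g(n)/2$. A $K_4$ is zero-sum iff it spans exactly three edges of $G$, so the absence of a zero-sum $K_4$ is equivalent to $G$ being \emph{nice}: no $4$-vertex induced subgraph of $G$ has exactly $3$ edges, i.e., $G$ contains no induced $P_4$, $K_{1,3}$, or $K_3\cup K_1$. The plan is to prove that every nice $G$ on $n\geq 5$ vertices satisfies $\min\{|E(G)|,|E(\bar G)|\}\leq n$ when $n\equiv 0\pmod 4$ and $\leq n-1$ otherwise---precisely the required bound. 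By complementation one reduces to the case where $G$ is triangle-free (the mixed case, where both $G$ and $\bar G$ contain triangles, has to be ruled out by a short case analysis on a common $K_3$ plus an independent $3$-set); then the three forbidden induced subgraphs force every component of $G$ to lie in $\{K_1,K_2,P_3,C_4\}$, yielding the edge bound at once. The sharp extremal examples are a disjoint union of $\lfloor n/4\rfloor$ copies of $C_4$ padded with $K_1$, $K_2$, or $P_3$ according to $n\bmod 4$, which realize exactly $n$ (resp.\ $n-1$) edges with no zero-sum $K_4$. The main technical step is the structural classification of nice graphs.

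For Part~(3), I would exhibit an explicit infinite family by induction. For the base case $n=5$, setting $f(1i)=2$ for $i=2,3,4,5$, $f(23)=f(34)=-1$, $f(24)=f(25)=f(35)=-2$, and $f(45)=0$ gives $\sum f=0$ and $w(v):=\sum_{e\ni v}f(e)\neq 0$ for every vertex $v$; since the $K_4=V(K_5)\setminus\{v\}$ has weight $-w(v)$, no $K_4$ is zero-sum. For the induction step, given a valid weighting on $K_n$, add a vertex $v'$ with edge weights $x_i=f(v',i)\in\{-2,\ldots,2\}$ subject to $\sum_i x_i=0$ (to preserve $\sum f=0$) and $x_a+x_b+x_c\neq -w(T)$ for every triangle $T=\{a,b,c\}\subseteq V(K_n)$ (to keep every new $K_4=\{v'\}\cup T$ off zero). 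A counting estimate shows that the number of feasible $(x_i)$ with $\sum_i x_i=0$ greatly exceeds the number ruled out by the $\binom{n}{3}$ triangle constraints once $n$ is sufficiently large, so a valid extension always exists. An alternative is to give a closed-form construction parametrized by $n$, for instance by modifying a bipartite $\pm 1$ weighting as in Part~(1) and using the extra values $\pm 2$ and $0$ to neutralize the critical $K_4$-splits $(3,1)$ and $(1,3)$.
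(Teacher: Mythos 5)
Your outline of Part (2) is essentially the paper's own argument and is fine: a zero-sum $K_4$ means exactly three $+1$-edges, so the $+1$-graph and its complement must both avoid induced $\{K_{1,3},\, K_3\cup K_1,\, \text{3-edge path}\}$; a two-triangle case analysis shows one of the two graphs is triangle-free, and then its components are among $K_1$, $K_2$, the two-edge path and $C_4$, giving the bound $\min\{e(-1),e(1)\}\le h(n)-1$ exactly as needed. The genuine gap in Part (1), however, sits exactly where the theorem has content. Your parity remark and the bipartite construction on $n=t^2$ only exclude zero-sum $K_m$ when $m$ is not a perfect square, and you explicitly leave the square case $m=d^2\ge 9$ --- in particular $m=4d^2$, which is precisely Problem~\ref{probl} that the theorem is meant to answer --- as ``the main obstacle,'' with only a vague gesture at multipartite analogues or perturbations. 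The paper needs a second, one-sided construction ($-1$ exactly on the edges inside a set $A$), which is balanced iff $n=\frac{1+y_k}{2}$ for solutions of the Pell equation $8x^2-8x+1=y^2$ (the set $S_1$), and then the nontrivial Diophantine fact $S_1\cap S_2=\{1,4\}$ (with $S_2$ the squares), proved via the equations $Y^4-2X^2=-1$ and Ljunggren's equation $Q^2-2W^4=-1$. Without an argument of this strength, the square case --- the heart of Part (1) --- is unproved.

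Part (3) also has a real gap: the induction step fails as stated. When you add a vertex and pick $x_1,\dots,x_n\in\{-2,\dots,2\}$ with $\sum_i x_i=0$, each triangle constraint $x_a+x_b+x_c\neq -w(T)$ excludes a constant fraction of the admissible vectors (for a single triple the forbidden event has probability up to about $19/125$), and there are $\binom{n}{3}$ constraints; the number of excluded assignments therefore does not compare favorably with the number available, the union bound fails badly, and the dependency structure (each constraint shares variables with $\Theta(n^2)$ others) rules out a routine local-lemma repair. It is also not clear that an arbitrary valid weighting of $K_n$ admits any valid extension, so the induction may simply get stuck. The paper instead gives one global construction: $f\equiv -2$ inside $X$, $+1$ inside $Y$, $0$ across; every $K_4$ with split $(i,j)$ has weight $-2\binom{i}{2}+\binom{j}{2}\in\{-12,-6,-1,3,6\}$, never $0$, and $\sum_{e}f(e)=0$ reduces to the negative Pell equation $y^2-2x^2=-1$ with $x=2|X|-1$, $y=2|Y|-1$, which has infinitely many odd solutions (e.g. $(|X|,|Y|)=(3,4)$ for $n=7$, $(15,21)$ for $n=36$). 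Your closing alternative (``use $\pm2$ and $0$ to neutralize the critical splits'') points toward this, but to complete Part (3) you must actually exhibit such a weighting and show the zero-sum condition is solvable for infinitely many $n$.
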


Theorem \ref{thm:main} together with the above Theorem \ref{thm:CY} do not only solve Problem \ref{probl}, they also supply a good understanding of the situation concerning $K_4$ as the value of $g(n)$ is sharp and the upper bound $(1-\epsilon)n^2/6$ in Theorem \ref{thm:CY} is nearly sharp, as already observed in \cite{CY3}.
 
We will use the following notation. Given a  weighting $f:E(K_n)\to \{-r, \cdots, r\}$ and $i\in \ \{-r, \cdots, r\}$, denote by $E(i)$ the set of  the $i$-weighted edges, that is, $E(i)=f^{-1}(i)$  and define $e(i)=|E(i)|$.  Given a vertex $x\in V(K_n)$ we use $deg_{i}(x)$ to denote the number of $i$-weighted edges incident to $x$, that is, $deg_{i}(x)=|\{u:f(xu)=i\}|$.

In Section \ref{sec:K4} we will prove instances 2 and 3 of Theorem \ref{thm:main}, corresponding to the study of the existence of zero-sum copies of $K_4$. In order to prove instance 2, we will use an equivalent formulation consequence of the following remark.

\begin{remark}\label{rem:eq}
A weighting $f:E(K_n)\to \{-1,1\}$ satisfies $\left|\sum_{e\in E(K_n)}f(e)\right|\leq \binom{n}{2}-g(n)$ if and only if $\min\{e(-1),e(1)\}\geq \frac{1}{2}g(n)$.
\end{remark}
The remark follows from the fact that $e(1)+e(-1)=\binom{n}{2}$, which implies $|\sum_{e\in E(K_n)}f(e)|=|e(1)-e(-1)|=\max\{e(-1),e(1)\}-\min\{e(-1),e(1)\}=\binom{n}{2}-2\min\{e(-1),e(1)\}$.

In Section \ref{sec:K4} we will also prove that instance 2 of Theorem \ref{thm:main} is best posible by exhibiting, for each $n\geq5$, a weighting function $f:E(K_n)\to \{-1,1\}$ with $\min\{e(-1),e(1)\}=   \frac{1}{2}g(n)-1$ and no zero-sum copies of $K_4$. Moreover, we will characterize the extremal functions.

Finally, relying heavily on Pell equations and some classical biquadratic Diophantine equations,
 in Section \ref{sec:Kk} we will prove instance 1 of Theorem \ref{thm:main}, corresponding to the study of the existence of zero-sum copies of $K_m$ in $0$-weighted weightings, where $m \neq 4$.
  

\section{The case of $K_4$}\label{sec:K4}

We will use standard graph theoretical notation to denote particular graphs. Having said this, $K_{1,3}$ will stand for the star with three leaves, $K_3 \cup K_1$ for the disjoint union of a triangle and a vertex, $P_k$ for a path with $k$ edges, and $C_k$ for a cycle with $k$ edges.

A weighting function $f:E(K_n) \to \{-1,1\}$ is \emph{zero-sum-$K_4$ free} if and only if the graph induced by $E(-1)$ (or equivalently $E(1)$) is $\{K_{1,3}, K_3 \cup K_1, P_3\}$-free (in the induced sense). The following lemma, which characterizes the $K_3$-free subclass of the family of $\{K_{1,3}, K_3 \cup K_1, P_3\}$-free graphs, will be useful in proving the forthcoming results. 
We define
\[h(n)= \left\{ \begin{array}{rl}
                  n+1, & \mbox{ if } n\equiv 0  \mbox{ (mod $4$), and} \\     
                 n,  & \mbox{ otherwise. }  \\              
\end{array}\right.\]

\begin{lemma}\label{lem:triangle-free}
Let $G$ be a $\{K_{1,3}, K_3, P_3\}$-free graph on $n$ vertices. Then each component of $G$ is isomorphic to one of $C_4$, $K_1$, $K_2$ or $P_2$. Moreover, $e(G) \le h(n)-1$, and equality holds if and only if $G \cong J \cup \bigcup_{i=1}^{q} C_4$, where $J \in \{\emptyset, K_1, K_2, P_2\}$ and $q = \lfloor \frac{n}{4} \rfloor$.
\end{lemma}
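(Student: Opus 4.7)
The plan is to proceed in two steps: first classify the connected components permitted by the hypotheses, then maximize the edge count by a short integer-optimization argument.

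For the structural claim, induced $K_{1,3}$-freeness forces every vertex of $G$ to have degree at most $2$, so each component of $G$ is a path or a cycle. Excluding the induced $P_3$ (three edges, four vertices) means that path components have at most two edges, giving only $K_1$, $K_2$, or $P_2$. For cycles, $K_3$-freeness rules out $C_3$; and for any $k \geq 5$, four consecutive vertices of $C_k$ induce a $P_3$ (the wrap-around chord is absent). Hence the only cycle component allowed is $C_4$, proving the first assertion.

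For the extremal bound, let $a, b, c, d$ denote the numbers of components isomorphic to $K_1$, $K_2$, $P_2$, $C_4$, respectively. Then
\[n = a + 2b + 3c + 4d, \qquad e(G) = b + 2c + 4d = n - (a+b+c).\]
Maximizing $e(G)$ is therefore equivalent to minimizing $s := a+b+c$ over non-negative integers $a,b,c,d$ subject to the congruence $a + 2b + 3c \equiv n \pmod{4}$, since $4d = n - a - 2b - 3c$ must be a non-negative multiple of $4$.

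A short case analysis on $n \bmod 4$ completes the proof. When $n \equiv 0 \pmod 4$, the unique minimizer is $(a,b,c)=(0,0,0)$ with $s=0$, so $e(G) \leq n = h(n)-1$ and equality forces $G \cong \bigcup_{i=1}^{n/4} C_4$ (matching $J = \emptyset$). When $n \not\equiv 0 \pmod 4$, the only triples with $s=1$ are $(1,0,0)$, $(0,1,0)$, $(0,0,1)$, and the congruence singles out exactly one of them according to whether $n \equiv 1, 2, 3 \pmod 4$, forcing the extra component $J$ to be $K_1$, $K_2$, or $P_2$, respectively; in every non-zero residue class one thus obtains $e(G) \leq n-1 = h(n)-1$ with the stated characterization. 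No step presents a serious obstacle; the only point deserving a little attention is the uniqueness of the minimizer in the classes $n \equiv 2, 3 \pmod 4$, which rules out trading a $C_4$ for cheaper low-density substitutes, and this is settled by direct inspection of the three triples with $s=1$.
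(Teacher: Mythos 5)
Your proof is correct and takes essentially the same route as the paper --- classify the components as $K_1$, $K_2$, $P_2$, $C_4$, then maximize the edge count --- with your mod-$4$ minimization of $a+b+c$ simply making explicit the paper's terser extremal step. One wording fix: induced $K_{1,3}$-freeness alone does not force maximum degree $2$ (e.g.\ $K_4$ is claw-free); you also need the $K_3$-freeness from the hypothesis, so that three neighbours of a degree-$3$ vertex are pairwise non-adjacent and hence induce a claw.
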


\begin{proof}
Let $J$ be a connected component of $G$. If $J$ has at most $3$ vertices, then it is easy to see that $J \in \{K_1, K_2, P_2\}$. So assume that $J$ has at least $4$ vertices. Then, since $J$ is $\{K_3, P_3\}$-free, we infer that $J$ has no vertex of degree larger than $2$ and so we can deduce that $J \cong C_4$. Further, we note that $e(J) = |J|$ if $J \cong C_4$, and $e(J) = |J|-1$ otherwise. This implies that, among all $\{K_{1,3}, K_3, P_3\}$-free graphs on $n$ vertices, $G$ has maximum number of edges if and only if $G \cong J \cup \bigcup_{i=1}^{q} C_4$, where $J \in \{\emptyset, K_1, K_2, P_2\}$ and $q = \lfloor \frac{n}{4} \rfloor$. Since, clearly, $e(J \cup \bigcup_{i=1}^{q} C_4) = h(n) -1$, the proof is complete.
\end{proof}

\begin{lemma}\label{lem:one_is_K3-free}
Let $n\geq 5$ and $f:E(K_n)\to \{-1,1\}$ be a zero-sum-$K_4$ free coloring. Let $G_{-1}$ and $G_1$ be the graphs induced by $E(-1)$ and $E(1)$, respectively.  Then at least one of $G_{-1}$ or $G_1$ is triangle-free.
\end{lemma}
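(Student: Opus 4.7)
The plan is to argue by contradiction: suppose both $G_1$ and $G_{-1}$ contain a triangle, say $T_1 = \{a,b,c\}$ in $G_1$ and $T_{-1} = \{u,v,w\}$ in $G_{-1}$, and split according to $k = |V(T_1) \cap V(T_{-1})|$. The key observation driving every subcase is that, since $K_4$ has $6$ edges, being zero-sum-$K_4$ free under a $\{-1,1\}$-weighting is equivalent to: no copy of $K_4$ contains exactly three $G_1$-edges (equivalently, exactly three $G_{-1}$-edges). The subcase $k \geq 2$ is immediate, because any edge shared by $T_1$ and $T_{-1}$ would have to carry both weights $+1$ and $-1$.

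For $k = 1$, I would set $u = a$ and study the four cross edges $bv, bw, cv, cw$. Applying the ``no $K_4$ with three $G_1$-edges'' condition to the copies of $K_4$ on $\{a,b,c,v\}$, $\{a,b,c,w\}$, $\{a,b,v,w\}$ and $\{a,c,v,w\}$ yields, respectively, that at least one of $bv, cv$ lies in $G_1$, at least one of $bw, cw$ lies in $G_1$, at most one of $bv, bw$ lies in $G_1$, and at most one of $cv, cw$ lies in $G_1$. Summing the two lower and two upper bounds forces exactly two of the four cross edges to lie in $G_1$, one incident to each of $b, c, v, w$; the $K_4$ on $\{b,c,v,w\}$ then has $G_1$-edges $bc$ plus those two cross edges and is a zero-sum $K_4$, a contradiction. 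The hypothesis $n \geq 5$ is exactly what guarantees that $\{b,c,v,w\}$ consists of four distinct vertices.

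For $k = 0$ (which forces $n \geq 6$, so this subcase is vacuous when $n = 5$), I would write $T_1 = \{a_1, a_2, a_3\}$ and $T_{-1} = \{x_1, x_2, x_3\}$, and encode the nine cross edges by the $3\times 3$ binary matrix $M$ with $M_{ij} = 1$ iff $a_i x_j \in G_1$. Looking at the $K_4$'s on $T_1 \cup \{x_j\}$, on $T_{-1} \cup \{a_i\}$, and on $\{a_i, a_{i'}, x_j, x_{j'}\}$ translates zero-sum-$K_4$ freeness into three conditions on $M$: every column sum is at least $1$, every row sum is at most $2$, and every $2\times 2$ submatrix has sum different from $2$. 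The last condition is equivalent to saying that any two rows of $M$ either coincide, or differ in exactly one coordinate while the remaining two coordinates carry a common value (both $0$ or both $1$) in both rows. The main obstacle is then a finite compatibility check on binary row-vectors of weight at most $2$: one finds that all rows of $M$ must lie in $\{(0,0,0), v\}$ for a single weight-$1$ vector $v$ (up to permuting columns), which forces two columns of $M$ to have sum $0$ and contradicts the column-sum constraint. This rules out $k = 0$ and completes the proof.
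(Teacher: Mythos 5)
Your proposal is correct in substance and follows the same overall skeleton as the paper (contradiction, case split on how many vertices the two triangles share), but the two halves compare differently. In the shared-vertex case your argument is essentially the paper's in different bookkeeping: the paper translates zero-sum-$K_4$ freeness into $G_{-1}$ being $\{K_{1,3}, K_3\cup K_1, P_3\}$-free (induced) and shows the cross edges between $\{b,c\}$ and $\{v,w\}$ must form a matching, producing an induced $P_3$, which is exactly your zero-sum $K_4$ on $\{b,c,v,w\}$ with three $+1$-edges. (Small quibble: the distinctness of $b,c,v,w$ comes from the triangles sharing only the vertex $a$, not from $n\ge 5$.) In the disjoint case you genuinely diverge: the paper reuses the shared-vertex case to conclude that no $(+1)$-triangle and $(-1)$-triangle can meet, so each vertex of one triangle sends at most one edge of the opposite sign to the other, giving at most $6$ cross edges against the actual $9$; you instead run a self-contained finite check on the $3\times 3$ cross-adjacency matrix. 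The paper's route is shorter and avoids any case enumeration; yours is independent of the first case and makes the constraints completely explicit.

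One slip to fix in the $k=0$ case: your claimed equivalence for the $2\times 2$ condition is too permissive. Two \emph{coinciding} rows are not automatically allowed: if both equal a non-constant vector, say $(1,0,0)$ or $(1,1,0)$, they contain a $2\times 2$ submatrix of sum $2$ (columns $\{1,2\}$, resp. $\{1,3\}$). The correct statement is that two rows are compatible iff either they are equal and constant, or they differ in exactly one coordinate and the remaining two coordinates carry a common value; combined with row sums at most $2$, the only compatible pairs are $(0,0,0)$ with itself and $(0,0,0)$ with a weight-one vector, which is what actually yields your stated conclusion that all rows lie in $\{(0,0,0),v\}$ with $v$ of weight one. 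As written, your equivalence would also admit, e.g., three identical rows $(1,1,0)$, which do not fit that conclusion (though even then a zero column survives, so the final contradiction with the column-sum constraint is robust). With this correction the argument is complete.
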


\begin{proof}
Suppose for contradiction that both $G_{-1}$ and $G_1$ have a triangle. Let $abc$ be a triangle in $G_{-1}$ and $uvw$ a triangle in $G_1$. Suppose first that $abc$ and $uvw$ have a vertex in common, say $a=u$. Consider the graph $J$ induced by the $(-1)$-edges among vertices in $\{a,b,c,v,w\}$. If a vertex $x \in \{b,c\}$ is neighbor of both $v$ and $w$, then $\{x,v,w,a\}$ would induce a $K_{1,3}$ in $G_{-1}$, which is not possible. If no vertex $x \in \{b,c\}$ is adjacent to some $y \in \{v,w\}$, then $\{a,b,c,y\}$ would induce a $K_3\cup K_1$ in $G_{-1}$, which again is not possible. Hence, $\{b,c,v,w\}$ induces two independent edges. But then $\{b,c,v,w\}$ induces a $P_3$ in $G_{-1}$, a contradiction. Hence, we can assume that any pair of triangles such that one has only $(-1)$-edges and the other only $1$-edges are vertex disjoint. This implies that from any vertex in $\{u,v,w\}$ there is at most one $(-1)$-edge to vertices from $\{a,b,c\}$. Analogously, from any vertex in $\{a,b,c\}$ there is at most one $1$-edge to vertices from $\{u,v,w\}$. But this implies that there are at most $6$ edges between $\{a,b,c\}$ and $\{u,v,w\}$, which is false.
Since in all cases we obtain a contradiction, we conclude that at least one of $G_{-1}$ or $G_1$ is triangle-free.
\end{proof}

By Remark \ref{rem:eq}, the next result is equivalent to instance 2 of Theorem \ref{thm:main}.

\begin{theorem}\label{thm:k4}
Let $n\geq 5$ and $f:E(K_n)\to \{-1,1\}$ such that $\min\{e(-1),e(1)\}\geq h(n)$. Then  there is a zero-sum $K_4$. 
\end{theorem}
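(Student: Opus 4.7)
The plan is to derive Theorem \ref{thm:k4} as a quick consequence of the two lemmas just established, combined with the observation at the start of Section \ref{sec:K4} that characterises zero-sum-$K_4$ free weightings in terms of forbidden induced subgraphs.

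First I would argue by contradiction and assume that $f$ is zero-sum-$K_4$ free. By the opening remark of Section \ref{sec:K4}, this means that both $G_{-1}$ and $G_1$ are $\{K_{1,3}, K_3\cup K_1, P_3\}$-free in the induced sense. Then I would invoke Lemma \ref{lem:one_is_K3-free} to produce a colour, say $-1$ (after an obvious symmetry between the two colours), for which $G_{-1}$ is triangle-free. Triangle-freeness immediately upgrades the forbidden family to $\{K_{1,3}, K_3, P_3\}$: forbidding $K_3$ subsumes forbidding $K_3\cup K_1$, while $K_{1,3}$ and $P_3$ are still forbidden from the original list.

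At this point $G_{-1}$ satisfies exactly the hypothesis of Lemma \ref{lem:triangle-free}, so I would apply that lemma to conclude that $e(-1)=e(G_{-1})\le h(n)-1$. This contradicts the hypothesis $\min\{e(-1),e(1)\}\ge h(n)$, and therefore $f$ must contain a zero-sum copy of $K_4$. By Remark \ref{rem:eq}, this is equivalent to instance 2 of Theorem \ref{thm:main}.

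There is no real obstacle here: the substantive work has been packaged into the two preceding lemmas, so the proof amounts to noticing that WLOG one colour class is both triangle-free and $K_3\cup K_1$-free, which triggers the structural/edge-count bound of Lemma \ref{lem:triangle-free}. The only small point to verify carefully is the WLOG step, namely that the hypothesis $\min\{e(-1),e(1)\}\ge h(n)$ is symmetric in the two colours, which it is.
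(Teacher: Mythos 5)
Your argument is correct and is essentially identical to the paper's proof: assume no zero-sum $K_4$, note both colour classes are $\{K_{1,3}, K_3\cup K_1, P_3\}$-free, use Lemma \ref{lem:one_is_K3-free} to make one class triangle-free, and then apply Lemma \ref{lem:triangle-free} to get $e \le h(n)-1$, contradicting the hypothesis. No gaps; the WLOG symmetry observation you flag is exactly how the paper handles it.
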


\begin{proof}
Let $f:E(K_n)\to \{-1,1\}$ be such that $\min\{e(-1),e(1)\}\geq h(n)$ and suppose for contradiction that it has no zero-sum $K_4$. Let $G_{-1}$ and $G_1$ be the graphs induced by $E(-1)$ and $E(1)$, respectively. Then both $G_{-1}$ and $G_1$ are $\{K_{1,3}, K_3 \cup K_1, P_3\}$-free graphs. By Lemma \ref{lem:one_is_K3-free}, $G_{-1}$ or $G_1$ is $K_3$-free. So we may assume, without loss of generality, that $G_{-1}$ is triangle-free. It follows by Lemma \ref{lem:triangle-free} that $e(-1) = |E(G_{-1})| \le h(n)-1$, which is a contradiction to the hypothesis.
\end{proof}

The following theorem shows that Theorem \ref{thm:k4} is best possible and characterizes the extremal zero-sum-$K_4$ free weightings. We will use Mantel's Theorem, that any graph on $n$ vertices and at least $\frac{n^2}{4}+1$ edges contains a copy of $K_3$.

\begin{theorem}\label{thm:k4_sharp}
Let $n\geq 5$ and $f:E(K_n)\to \{-1,1\}$ such that $e(1) = h(n)-1$. Then $f$ is zero-sum-$K_4$ free if and only if the graph induced by $E(1)$ is isomorphic to $J \cup \bigcup_{i=1}^{q} C_4$, where $J \in \{\emptyset, K_1, K_2, P_2\}$ and $q = \lfloor \frac{n}{4} \rfloor$.
\end{theorem}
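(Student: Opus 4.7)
The plan is to separate the two directions and lean on Lemmas~\ref{lem:triangle-free} and~\ref{lem:one_is_K3-free} together with the characterization noted at the start of Section~\ref{sec:K4} (namely, $f$ is zero-sum-$K_4$ free iff $G_1$ is $\{K_{1,3}, K_3 \cup K_1, P_3\}$-free in the induced sense). For the backward implication, given $G_1 \cong J \cup \bigcup_{i=1}^{q} C_4$, I would verify the three forbidden subgraphs by inspection: $G_1$ has maximum degree $2$ and is triangle-free, which rules out induced $K_{1,3}$ and induced $K_3 \cup K_1$; and any $4$-vertex subset either coincides with a $C_4$-component (inducing $C_4$, with $4$ edges) or is spread across at least two components (inducing at most $2$ edges), so no induced $P_3$ can appear. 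Hence $f$ is zero-sum-$K_4$ free.

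For the forward implication, I would use the same equivalence to record that both $G_1$ and $G_{-1}$ are $\{K_{1,3}, K_3 \cup K_1, P_3\}$-free, and then apply Lemma~\ref{lem:one_is_K3-free} to split into two cases according to which of $G_1$, $G_{-1}$ is triangle-free. In the favorable case, $G_1$ is itself triangle-free, so $G_1$ is now $\{K_{1,3}, K_3, P_3\}$-free and Lemma~\ref{lem:triangle-free} gives $e(G_1) \le h(n)-1$ with equality precisely when $G_1$ has the claimed isomorphism type. Since the hypothesis $e(1) = h(n)-1$ is exactly the equality case, the conclusion follows immediately.

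The place where I would expect the work to sit is the unfavorable case, $G_{-1}$ triangle-free while $G_1$ contains a triangle, although it turns out to collapse cleanly. Applying Lemma~\ref{lem:triangle-free} to $G_{-1}$ gives $e(-1) \le h(n)-1$, while the identity $e(1) + e(-1) = \binom{n}{2}$ together with $e(1) = h(n)-1$ forces $e(-1) = \binom{n}{2} - h(n) + 1$. Combining these yields $\binom{n}{2} \le 2h(n) - 2$, which is easily seen to fail for every $n \ge 5$ since $h(n) \le n+1$ while $\binom{n}{2}$ is quadratic in $n$ (the tightest instance is $n = 5$, where $h(5)-1 = 4$ but $e(-1) = \binom{5}{2} - 4 = 6$, an outright contradiction). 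Hence the unfavorable case cannot occur. The main conceptual obstacle is recognizing that the sharp linear bound of Lemma~\ref{lem:triangle-free} together with the edge-count identity is already strong enough to kill the second case without any further combinatorial argument, so that the entire characterization reduces to one application of each previous lemma.
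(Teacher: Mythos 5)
Your proof is correct. The backward direction matches the paper's (which simply declares it ``easy to check''), and your forward direction has the same skeleton --- reduce to the induced $\{K_{1,3}, K_3\cup K_1, P_3\}$-free formulation, invoke Lemma~\ref{lem:one_is_K3-free}, then apply the equality case of Lemma~\ref{lem:triangle-free} to $G_1$ --- but you handle the key step (showing that it is $G_1$, the sparse colour class, that must be triangle-free) differently. The paper argues directly: for $n\ge 6$ it notes $e(-1)=\binom{n}{2}-h(n)+1\ge \frac{n(n-3)}{2}\ge \frac{n^2}{4}$ and invokes Mantel's Theorem to produce a triangle in $G_{-1}$, whence Lemma~\ref{lem:one_is_K3-free} forces $G_1$ triangle-free; since Mantel's bound is too weak at $n=5$ (where $e(-1)=6$ and a triangle-free graph such as $K_{2,3}$ has that many edges), the paper disposes of $n=5$ by a separate direct check. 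You instead rule out the bad case by contradiction: if $G_1$ had a triangle, Lemma~\ref{lem:one_is_K3-free} would make $G_{-1}$ triangle-free, and then Lemma~\ref{lem:triangle-free} applied to $G_{-1}$ would give $e(-1)\le h(n)-1$, contradicting $e(-1)=\binom{n}{2}-h(n)+1$ for every $n\ge 5$ (your explicit check of the tight instance $n=5$, where $h(5)=5$, closes the only place the crude bound $h(n)\le n+1$ is not immediately decisive). Your route buys uniformity --- no appeal to Mantel and no special case at $n=5$ --- by exploiting the much stronger linear bound of Lemma~\ref{lem:triangle-free}, essentially recycling the argument of Theorem~\ref{thm:k4}; the paper's route uses only the crude quadratic edge count plus a standard external theorem, at the cost of one small case analysis. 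Both are complete and correct.
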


\begin{proof}
If the graph induced by $E(1)$ is isomorphic to $J \cup \bigcup_{i=1}^{q} C_4$, where $J \in \{\emptyset, K_1, K_2, P_2\}$ and $q = \lfloor \frac{n}{4} \rfloor$, it is easy to check that $f$ is zero-sum-$K_4$ free. Conversely, let $f$ be zero-sum-$K_4$ free. Then the graphs $G_{-1}$ and $G_1$ induced by $E(-1)$ and $E(1)$, respectively, are both $\{K_{1,3}, K_3 \cup K_1, P_3\}$-free. If $n = 5$, it is easy to check that the only $\{K_{1,3}, K_3 \cup K_1, P_3\}$-free graph with $h(5) - 1 = 4$ edges is isomorphic to $C_4 \cup K_1$, and so we are done. Hence, we may assume that $n \ge 6$. Observe that 
\[e(-1) = \frac{n(n-1)}{2} - h(n)+1 \ge \frac{n(n-1)}{2} - n = \frac{n(n-3)}{2},\]
whose right-hand side is at least $\frac{n^2}{4}$ for $n \ge 6$. Hence, by Mantel's Theorem, $G_{-1}$ has a triangle, and, by Lemma \ref{lem:one_is_K3-free}, this implies that $G_1$ is triangle-free. It follows that $G_1$ is a $\{K_{1,3}, K_3, P_3\}$-free graph on $n$ vertices and with $h(n) - 1$ edges. Thus, with Lemma \ref{lem:triangle-free}, we obtain that $G_1$ is isomorphic to $J \cup \bigcup_{i=1}^{q} C_4$, where $J \in \{\emptyset, K_1, K_2, P_2\}$ and $q = \lfloor \frac{n}{4} \rfloor$, and we are done.
\end{proof}

The following theorem is instance 2 from Theorem \ref{thm:main}. It shows that, whenever we take a wider range for the weighting function $f$, we cannot hope for a result as in Theorem \ref{thm:k4} anymore.

\begin{theorem}\label{thm:larger_range}
There are infinitely many values of $n$ such that the following holds:
There is a weighting function $f:E(K_n)\to \{-2,-1,0,1,2\}$ with $\sum_{e\in E(K_n)}f(e)=0$ which is zero-sum-$K_4$ free.
\end{theorem}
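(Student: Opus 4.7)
The plan is a two-block construction. Let $V(K_n) = A \cup B$ with $|A|=p$, $|B|=q$, and $n=p+q$, and set $f(uv)=-1$ when $u,v$ lie in the same block and $f(uv)=+2$ when they lie in different blocks. The $f$-weight of any $K_4$ depends only on the $A/B$ split of its four vertices: a $4$--$0$ split gives $-6$, a $3$--$1$ split gives $3(-1)+3(2)=3$, and a $2$--$2$ split gives $2(-1)+4(2)=6$. All three values are nonzero, so any such $f$ is automatically zero-sum-$K_4$ free irrespective of $p$ and $q$.

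The global condition $\sum_{e\in E(K_n)}f(e)=0$ reads $2pq=\binom{p}{2}+\binom{q}{2}$. Setting $s=p+q$ and $d=p-q$, an elementary manipulation rewrites this as $(s+1)^{2}-3d^{2}=1$, the classical Pell equation $X^{2}-3Y^{2}=1$ with $X=s+1$ and $Y=d$. From the fundamental solution $(2,1)$, the recurrence $(X_{k+1},Y_{k+1})=(2X_{k}+3Y_{k},\,X_{k}+2Y_{k})$ generates infinitely many solutions, producing $(s,d)=(6,4),(25,15),(96,56),(361,209),\ldots$.

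To finish, I check admissibility. From $(s+1)^{2}=3d^{2}+1$ the integers $s$ and $d$ share parity, so $p=(s+d)/2$ and $q=(s-d)/2$ are integers; the inequality $d^{2}=s(s+2)/3\le s^{2}$ (for $s\ge 1$) gives $d\le s$, hence $p,q\ge 0$, and every Pell solution beyond the trivial one gives $p,q\ge 1$. This yields the infinite family $n\in\{6,25,96,361,\ldots\}$, each equipped with an $f$ satisfying the theorem. The only real substance in the argument is the observation that the bipartite $(-1,+2)$-weighting forces every $K_{4}$-sum to land in $\{-6,3,6\}$: this decouples the zero-sum-$K_{4}$-free condition from the global sum condition and leaves only the Pell equation to be solved; the remaining manipulations are routine, and the ``main obstacle'', such as it is, lies in spotting this clean decoupling rather than in any technical difficulty.
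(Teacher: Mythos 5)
Your proof is correct and follows essentially the same strategy as the paper: choose a two-block weighting whose constant values force every $K_4$-weight into a fixed set of nonzero numbers (decoupling the zero-sum-$K_4$-free condition from the block sizes), and then reduce the global condition $\sum_{e\in E(K_n)}f(e)=0$ to a Pell equation with infinitely many solutions. The only difference is in the constants chosen -- the paper uses $-2$ inside one part, $+1$ inside the other and $0$ across, leading to $y^2-2x^2=-1$, while you use $-1$ within parts and $+2$ across, leading to $(s+1)^2-3d^2=1$ -- and your verification of the $K_4$-weights, the parity of $s$ and $d$, and the positivity of $p,q$ is all accurate.
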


\begin{proof}
Let $X \cup Y$ be a partition of the vertex set of $K_n$ and consider the weighting function $f:E(K_n)\to \{-2,-1,0,1,2\}$ such that 
\[
f(uv) = \left\{\begin{array}{ll}
-2, & \mbox{if } u,v \in X\\
1, & \mbox{if } u,v \in Y\\
0, & \mbox{otherwise}.
\end{array}
\right.
\]
Clearly, $f$ is zero-sum-$K_4$ free. On the other hand, $\sum_{e\in E(K_n)}f(e)=0$ if and only if
\[
-2\frac{|X|(|X|-1)}{2} + \frac{|Y|(|Y|-1)}{2} = 0,
\]
which is equivalent to $(2|Y|-1)^2-2(2|X|-1)^2 = -1$. Hence, solving the latter equation is equivalent to solve the following Pell's equation
\begin{equation}\label{eq:pell-pythago}
y^2-2x^2 = -1,
\end{equation}
for (odd) integers $x = 2|X|-1$ and $y= 2|Y|-1$. It is well-known that the Diophantine equation $y^2-2x^2=\pm 1$ has infinitely many solutions given by
\[x_{k}=\frac{a^k-b^k}{a-b}=\frac{a^k-b^k}{2\sqrt{2}}, \hspace{2ex} y_k=\frac{a^k+b^k}{2},\]
where $a=1+\sqrt{2}$, $b=1-\sqrt{2}$ and $k \in \mathbb{N}$. Moreover, since  $y_k^2 - 2x_k^2 = (-1)^k$, the solutions for equation (\ref{eq:pell-pythago}) are the pairs $(x_k,y_k)$ where $k$ is odd. Observe also that, for odd $k$, $x_k$ and $y_k$ are odd, too. Hence, each odd $k$ gives us a solution $(\frac{x_k+1}{2}, \frac{y_k+1}{2})$ for $(|X|,|Y|)$ and thus for $n = \frac{x_k+y_k}{2}+1$ and we are done.
\end{proof}

For the sake of comprehension, let us compute small values of  $n=\frac{x_k+y_k}{2}+1$ and exhibit how the partition  $(|X|,|Y|)=(\frac{x_k+1}{2}, \frac{y_k+1}{2})$ gives a zero-sum weighting function $f$ as described in the theorem. Recall that we only want to consider solutions for  odd values of $k$. 
So we have $(x_1,x_3,x_5,\dots)=(1,5,29,\dots)$ and  $(y_1,y_3,y_5,\dots)=(1,7,41,\dots)$, and the corresponding sequence of $n$'s is $(2,7,36,\dots )$. The case of $n=2$ is not interesting for vaquity reasons. For $n=7$, the partition is  $(|X|,|Y|)=(3,4)$, thus there will be $ \binom{3}{2}$ edges weighted with $-2$,   $ \binom{4}{2}$ edges weighted with $1$ and the rest of edges weighted with $0$, adding up to zero. For $n=36$,  the partition is  $(|X|,|Y|)=(15,21)$, and the sum of weighted edges is $-2\binom{15}{2}+1\binom{21}{2}=(-2) \cdot 105+1 \cdot 210=0$.

\section{The case of $K_m$, $m\neq 4$}\label{sec:Kk}

A \emph{balanced} $\{-1,1\}$-weighting function $f:E(K_n)\to \{-1,1\}$ is a function for which $e(-1)=e(1)$. 
In Section \ref{sec:K4}, we prove that, for $n\geq 5$, any function $f:E(K_n)\to \{-1,1\}$ with sufficiently many edges assigned to each type contains a zero-sum $K_4$. In this section, we prove that this is not true  for $K_m$ with $m \in \mathbb{N} \setminus \{1, 4\}$. In other words, we exhibit, for infinitely many values of $n$, the existence of a balanced weighting function $f:E(K_n)\to \{-1,1\}$ without a zero-sum copies of $K_m$, where $m \neq 1,4$. In order to define those functions, consider first the following Pell equation:
\begin{equation}\label{eq:pell}
8x^2-8x+1=y^2.
\end{equation}
It is well known that such a Diophantine equation has infinitely many solutions given by the recursion 
\[(x_1,y_1)=(1,1),\]
\[(x_2,y_2)=(3,7),\]
\[y_k=6y_{k-1}-y_{k-2}, \hspace{.5cm} x_{k}=\frac{y_k+x_{k-1}+1}{3}.\]


\begin{lemma}\label{lem:bal1}
Let $n$ be a positive integer and consider the complete graph $K_n$ and a partition $V(K_n) = A \cup B$ of its vertex set. Then the function $f:E(K_n)\to \{-1,1\}$ defined as
\[f(e)= \left\{ \begin{array}{rl}
                   -1, & \mbox{ if } e\subset A \\     
                  1,  & \mbox{ otherwise, }  \\              
\end{array}\right.\]
is balanced if and only if $n = \frac{1+y_k}{2}$ and $|A| = x_k$ for some $k \in\mathbb{N}$.
\end{lemma}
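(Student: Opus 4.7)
My plan is to turn the balance condition into a clean Diophantine equation and then identify it with the Pell equation~(\ref{eq:pell}). With $a=|A|$, the definition of $f$ gives $e(-1)=\binom{a}{2}$ and $e(1)=\binom{n}{2}-\binom{a}{2}$, so $f$ is balanced exactly when $2\binom{a}{2}=\binom{n}{2}$, i.e.\ $2a(a-1)=n(n-1)$. Multiplying by $4$ and completing squares transforms this into
\[
8a^{2}-8a+1=(2n-1)^{2}.
\]
Hence balance is equivalent to $(x,y)=(a,\,2n-1)$ being a positive integer solution of equation~(\ref{eq:pell}); note that $y=2n-1$ is automatically odd, and, conversely, every solution of~(\ref{eq:pell}) must have $y$ odd because $8x^{2}-8x+1$ is odd.

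Next, I would argue that the positive solutions of~(\ref{eq:pell}) are precisely the pairs $(x_k,y_k)$ listed in the statement. For this I would make the substitution $u=2x-1$, which turns $8x^{2}-8x+1=y^{2}$ into the classical negative Pell equation
\[
y^{2}-2u^{2}=-1.
\]
Its complete set of positive solutions is well known to be given by $y_k+u_k\sqrt{2}=(1+\sqrt{2})^{2k-1}$ for $k\ge 1$, producing the sequence $(u_1,y_1)=(1,1)$, $(u_2,y_2)=(5,7)$, $(u_3,y_3)=(29,41),\dots$. Since $u_k=2x_k-1$ is always odd, each $x_k=(u_k+1)/2$ is a positive integer, giving the values $x_1=1,\;x_2=3,\;x_3=15,\dots$ claimed in the statement.

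Finally I would verify that the formulas from the generating identity match the recurrences $y_k=6y_{k-1}-y_{k-2}$ and $x_k=(y_k+x_{k-1}+1)/3$. The first recursion is standard: $(1+\sqrt{2})^{2}=3+2\sqrt{2}$ has characteristic polynomial $t^{2}-6t+1$, which gives $y_k=6y_{k-1}-y_{k-2}$ (and likewise for the $u_k$). For the second recursion I would insert the definition $x_k=(u_k+1)/2$ together with $u_k=6u_{k-1}-u_{k-2}=3y_{k-1}+2u_{k-1}$ (from multiplying $(1+\sqrt{2})^{2k-1}$ by $3+2\sqrt{2}$) to derive $u_k=y_k-2x_{k-1}+\tfrac{2}{3}(\cdots)$; after tidying one obtains $3x_k=y_k+x_{k-1}+1$, matching the stated recursion. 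Combining the two directions, $f$ is balanced iff $(|A|,2n-1)=(x_k,y_k)$ for some $k$, i.e.\ iff $|A|=x_k$ and $n=(1+y_k)/2$, which is exactly the claim.

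The only nontrivial point is showing that the recursion in the statement enumerates all solutions of~(\ref{eq:pell}); everything else is bookkeeping. I expect the reduction to $y^{2}-2u^{2}=-1$ via $u=2x-1$ to be the cleanest way to do this, after which one can appeal to the classical description of solutions of the negative Pell equation.
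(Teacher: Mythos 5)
Your proposal is correct and follows essentially the same route as the paper: both reduce balance of $f$ to the Diophantine equation $8|A|^2-8|A|+1=(2n-1)^2$ (the paper by solving the quadratic in $n$, you by completing the square) and then identify its solutions with the pairs $(x_k,y_k)$; the only difference is that you actually verify the ``well known'' enumeration via the negative Pell equation $y^2-2u^2=-1$ and check the recursions, whereas the paper simply cites this as known. One harmless slip in your sketch: multiplying $(1+\sqrt{2})^{2k-3}$ by $3+2\sqrt{2}$ gives $u_k=2y_{k-1}+3u_{k-1}$ (not $3y_{k-1}+2u_{k-1}$), but the identity $3x_k=y_k+x_{k-1}+1$ you aim for does follow from the correct formula.
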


\begin{proof}
Suppose first that $f$ is balanced and let $|A| = x$. Then 
$$e(-1)=\frac{x(x-1)}{2} = \frac{1}{2} \binom{n}{2},$$ which yields
\[
n^2-n-(2x^2-2x)=0,
\]
and therefore $n = \frac{1+\sqrt{8x^2-8x+1}}{2}$. But this is only an integer if $8x^2-8x+1 = y^2$ for some integer $y$, and we obtain equation (\ref{eq:pell}). Hence, $|A| = x = x_k$ and $n = \frac{1+y_k}{2}$ for some $k \in \mathbb{N}$.\\
Conversely, suppose that $n = \frac{1+y_k}{2}$ and $|A| = x_k$ for some $k \in\mathbb{N}$. Then
\[
n = \frac{1+y_k}{2} = \frac{1+\sqrt{y_k^2}}{2} = \frac{1+\sqrt{8x_k^2-8x_k+1}}{2}.
\]
Thus $n$ is the positive root of
\begin{equation}\label{eq:n_k}
n^2-n-(2x_k^2-2x_k)=0,
\end{equation}
which is equivalent to
\[
\frac{x_k(x_k-1)}{2}=\frac{1}{2}\binom{n}{2}.
\]
Since the left hand side of this equation is precisely $e(-1)$ and the right hand side is half the number of the edges of $K_n$, it follows that $f$ is balanced.
\end{proof}

\begin{lemma}\label{lem:bal2}
Let $n$ be a positive integer and consider the complete graph $K_n$ and a partition $V(K_n) = A \cup B$ of its vertex set. Then the function \[f(e)= \left\{ \begin{array}{rl}
                   -1, & \mbox{ if } e\subset A  \mbox{ or } e\subset B \\     
                  1,  & \mbox{ otherwise, }  \\              
\end{array}\right.\]
is balanced if and only if $n = k^2$ and $\{|A|, |B| \} = \{\frac{1}{2}k(k+1), \frac{1}{2}k(k-1)\}$ for some $k \in\mathbb{N}$.
\end{lemma}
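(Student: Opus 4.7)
The plan is to reduce the balance condition to a single algebraic equation in $a=|A|$ and $b=|B|$, solve it directly, and verify both implications. First, I would count the edges of each weight: the $-1$-edges are exactly those lying inside $A$ or inside $B$, giving $e(-1)=\binom{a}{2}+\binom{b}{2}$, while the $1$-edges are the crossing edges, giving $e(1)=ab$. Recalling $a+b=n$, balancedness $e(-1)=e(1)$ then reads
\[
a(a-1)+b(b-1)=2ab.
\]

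The key step is the identity
\[
a(a-1)+b(b-1)-2ab=(a-b)^2-(a+b),
\]
so the balance condition is equivalent to $(a-b)^2=a+b=n$. In particular $n$ must be a perfect square, say $n=k^2$ with $k\in\mathbb{N}$, and then $|a-b|=k$. Combining $a+b=k^2$ with (WLOG) $a-b=k$ yields $a=\frac{1}{2}k(k+1)$ and $b=\frac{1}{2}k(k-1)$, which is exactly the claimed characterization. Conversely, if $n=k^2$ and $\{|A|,|B|\}=\{\frac{1}{2}k(k+1),\frac{1}{2}k(k-1)\}$, then $(a-b)^2=k^2=a+b$, so reversing the algebraic steps gives $e(-1)=e(1)$, proving $f$ is balanced.

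There is essentially no obstacle: once one spots that $(a-b)^2-(a+b)$ telescopes out of the balance equation, the characterization is immediate and no Diophantine machinery is required. It is worth contrasting this with Lemma~\ref{lem:bal1}, where the analogous counting produced the genuine Pell equation $8x^2-8x+1=y^2$; here the corresponding equation $(a-b)^2=a+b$ is so clean that the solution set is parametrized trivially by $k$, and one only needs to check that the resulting $a,b$ are nonnegative integers, which is automatic since $k(k\pm 1)$ is always even.
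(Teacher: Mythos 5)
Your proof is correct and follows essentially the same route as the paper: both reduce balancedness to the same quadratic condition (your identity $(a-b)^2=a+b$ is exactly the paper's equation $w=(n\pm\sqrt{n})/2$ in disguise, since $a-b=n-2w$), forcing $n$ to be a perfect square and yielding the stated sizes. Your symmetric rearrangement is a slightly cleaner way to organize the same computation, and the converse check matches the paper's.
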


\begin{proof}
Suppose first that $f$ is balanced and let $|A| = w$. Then 
\[
e(1) = w(n-w)=\frac{1}{2} \binom{n}{2},
\]
which is equivalent to
\[
w^2 - nw + \frac{1}{4}n(n-1)=0.
\]
Hence, 
\begin{equation}\label{eq:n(w)}
w = \frac{n \pm \sqrt{n}}{2},
\end{equation}
which is an integer if and only if $n = k^2$ for some $k \in \mathbb{N}$. So we obtain $n = k^2$ and $w \in \{\frac{1}{2}k(k+1), \frac{1}{2}k(k-1)\}$. Since $|B| = n - |A| = k^2 - w$, it follows easily that $\{|A|, |B| \} = \{\frac{1}{2}k(k+1), \frac{1}{2}k(k-1)\}$ and we are done.\\
Conversely, suppose that $n = k^2$ and $\{|A|, |B| \} = \{\frac{1}{2}k(k+1), \frac{1}{2}k(k-1)\}$ for some $k \in\mathbb{N}$. Without loss of generality, assume that $|A| = \frac{1}{2}k(k+1)$. Then
\[
e(1) = |A| (n - |A|) = \frac{1}{2}k(k+1) \left( k^2 - \frac{1}{2}k(k+1)\right) = 
\frac{1}{4} k^2(k^2-1) = \frac{1}{2} \binom{n}{2},\]
implying that $f$ is balanced.
\end{proof}

We define the set $S_1$ as the set of all integers $n_k = \frac{1+y_k}{2}$, $k \in \mathbb{N}$, where $(x_k,y_k)$ is the k-th solution of (\ref{eq:pell}), that is,
$$S_1 = \left\{\frac{1+y_k}{2} \; | \; k \in \mathbb{N} \right\}.$$
Further, let $S_2$ be the set of all integer squares, that is,
$$S_2 = \left\{k^2 \; | \; k \in \mathbb{N} \right\}.$$

Lemmas \ref{lem:bal1} and \ref{lem:bal2} yield the following corollary.

\begin{corollary}\label{cor:Km_Si}
For any integer $m \in \mathbb{N} \setminus (S_1 \cap S_2)$, there are infinitely many positive integers $n$ such that there exists a balanced function $f:E(K_n)\to \{-1,1\}$  without zero-sum $K_m$.
\end{corollary}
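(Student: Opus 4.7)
The plan is to exploit the two balanced $\{-1,1\}$-weightings already constructed in Lemmas \ref{lem:bal1} and \ref{lem:bal2}, each furnishing an infinite family of $n$'s (the elements of $S_1$ and of $S_2$). For a fixed $m\notin S_1\cap S_2$, I would compute the weight of an arbitrary copy of $K_m$ under each weighting and show that in at least one of the two cases the weight never vanishes, delivering the required infinite family of $n$'s.

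First, for the weighting $f_1$ of Lemma \ref{lem:bal1} on $K_{n_k}$ with partition $A\cup B$ of sizes $(x_k,\,n_k-x_k)$, a copy $H$ of $K_m$ using $a$ vertices of $A$ and $m-a$ of $B$ has $\binom{a}{2}$ edges of weight $-1$ and, by Vandermonde, $\binom{m}{2}-\binom{a}{2}$ edges of weight $+1$, so
\[
w(H)=\binom{m}{2}-2\binom{a}{2}.
\]
The condition $w(H)=0$ reduces to $\binom{m}{2}=2\binom{a}{2}$, which is precisely the equation analyzed in Lemma \ref{lem:bal1} with the roles of $n$ and $x$ played by $m$ and $a$; it has a positive integer solution exactly when $m\in S_1$. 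Hence if $m\notin S_1$, then $w(H)\neq 0$ for every copy of $K_m$, and taking $k$ large enough so that $\min\{|A|,|B|\}\geq m$ yields infinitely many suitable $n\in S_1$. Next, for the weighting $f_2$ of Lemma \ref{lem:bal2} on $K_{k^2}$, a $K_m$-copy with $a$ vertices in $A$ has $\binom{a}{2}+\binom{m-a}{2}=\binom{m}{2}-a(m-a)$ edges of weight $-1$ and $a(m-a)$ edges of weight $+1$, so a short manipulation gives
\[
w(H)=2a(m-a)-\binom{m}{2}=\frac{1}{2}\bigl(m-(2a-m)^2\bigr),
\]
which vanishes only if $m=(2a-m)^2$, i.e.\ $m\in S_2$. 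Therefore, if $m\notin S_2$, no copy of $K_m$ is zero-sum under $f_2$, and the same ``$k$ large'' argument produces infinitely many suitable $n=k^2\in S_2$.

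Since $m\notin S_1\cap S_2$ means $m\notin S_1$ or $m\notin S_2$, one of the two cases applies and the corollary follows. The essence of the argument is really just the two weight computations combined with the translation of $w(H)=0$ into membership in $S_1$ or $S_2$; no serious obstacle is expected, with only minor bookkeeping for the boundary splits $a\in\{0,m\}$ (which contribute $\binom{m}{2}\neq 0$ for $m\geq 2$) and the verification that every split $0\leq a\leq m$ is realizable in $K_n$ once $k$ is sufficiently large.
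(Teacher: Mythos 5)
Your proof is correct and takes essentially the same approach as the paper: you use the two constructions of Lemmas \ref{lem:bal1} and \ref{lem:bal2}, note that a zero-sum copy of $K_m$ would force $m\in S_1$ (respectively $m\in S_2$), and conclude via the dichotomy $m\notin S_1$ or $m\notin S_2$, each of $S_1,S_2$ being infinite. Your explicit weight computations for a copy split as $(a,m-a)$ are just an unwinding of the paper's shorter observation that $f$ restricted to a zero-sum copy is a balanced weighting of the same form, to which the lemmas apply.
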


\begin{proof}
Let $m \in \mathbb{N} \setminus (S_1 \cap S_2)$. By Lemmas \ref{lem:bal1} and \ref{lem:bal2}, there is a balanced function $f:E(K_n)\to \{-1,1\}$ for each $n \in S_1 \cup S_2$. Suppose that there is a zero-sum $K_m$ in such a weighting $f$ for a given $n \in S_1 \cup S_2$. Then, the function $f$ restricted to the edges of $K_m$ is a balanced function on $E(K_m)$, which is not possible by Lemmas \ref{lem:bal1} and \ref{lem:bal2} since $m \in \mathbb{N} \setminus S_1 \cap S_2$. Since $S_1 \cup S_2$ has infinitely many elements, it follows that there are infinitely many positive integers $n$ such that there exists a balanced function $f:E(K_n)\to \{-1,1\}$  without zero-sum $K_m$.
\end{proof}

Now we can state the main result of this section, which is equivalent to instance 3 of Theorem \ref{thm:main}.

\begin{theorem}
For any integer $m \in \mathbb{N} \setminus \{1, 4\}$, there are infinitely many positive integers $n$ such that there exists a balanced weighting  $f:E(K_n)\to \{-1,1\}$   which is zero-sum-$K_m$ free.
\end{theorem}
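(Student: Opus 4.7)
The plan is to combine Corollary \ref{cor:Km_Si} with a Diophantine analysis pinning down the intersection $S_1\cap S_2$. The corollary already delivers the conclusion for every $m\in\mathbb{N}\setminus(S_1\cap S_2)$, so it suffices to establish the claim
\[
S_1\cap S_2 \;=\; \{1,4\},
\]
which would then immediately yield the theorem for every $m\in\mathbb{N}\setminus\{1,4\}$.

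To prove the claim, take $m\in S_1\cap S_2$. Writing $m=j^2$ from $m\in S_2$ and $m=(1+y_k)/2$ from $m\in S_1$, we obtain $y_k=2j^2-1$. The Pell equation (\ref{eq:pell}) rewrites, upon setting $u=2x_k-1$, as the negative Pell equation $y_k^2-2u^2=-1$; substituting $y_k=2j^2-1$ then yields the biquadratic Diophantine equation
\[
u^2 \;=\; 2j^4-2j^2+1 \;=\; (j^2)^2+(j^2-1)^2.
\]
Thus each $m\in S_1\cap S_2$ is the square of the larger leg of a Pythagorean triple whose legs differ by one.

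To finish, I would prove that this equation admits only the two positive integer solutions $(j,u)=(1,1)$ and $(j,u)=(2,5)$, corresponding exactly to $m=1$ and $m=4$. The key manipulation is the factorization
\[
(u-1)(u+1) \;=\; 2j^2(j-1)(j+1),
\]
together with the observation that $u$ is odd, whence $\gcd(u-1,u+1)=2$. Splitting into cases according to the parity of $j$ and tracking the $2$-adic valuation of each side, the coprimality of consecutive integers forces each factor on the right-hand side into either a perfect square or twice a perfect square; this reduces the problem to an incompatible system of simultaneous Pell-like equations whose only positive solutions are the two small ones listed. Alternatively, one can invoke a classical Ljunggren-type result on biquadratic equations of the form $X^2-DY^4=\pm 1$ to rule out the remaining possibilities.

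The main obstacle is precisely this Diophantine step. The reduction to Corollary \ref{cor:Km_Si} and the passage to $u^2=2j^4-2j^2+1$ are essentially formal, but ruling out every solution with $j\geq 3$ requires either a careful descent argument using the factorization above or an appeal to a classical theorem on biquadratic Pell equations; this is the genuinely delicate part of the argument and the reason the paper announces a reliance on \emph{classical biquadratic Diophantine equations}.
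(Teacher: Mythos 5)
Your reduction is the same as the paper's: invoke Corollary \ref{cor:Km_Si}, then show $S_1\cap S_2=\{1,4\}$, and your equation $u^2=2j^4-2j^2+1$ (with $u=2x_k-1$, $j^2=(1+y_k)/2$) is exactly the equation the paper arrives at in the form $X^2=2q^4-2q^2+1$, i.e.\ $Q^2-2X^2=-1$ with $Q=2q^2-1$. The problem is that everything after this point --- the only genuinely hard part of the theorem --- is asserted rather than proved, and the mechanism you sketch for it does not work. From $(u-1)(u+1)=2j^2(j-1)(j+1)$ and $\gcd(u-1,u+1)=2$ you cannot conclude that the factors on the right are squares or twice squares: the left-hand side is not a perfect square, and nothing controls how the prime factors of $j-1$, $j$, $j+1$ distribute between $u-1$ and $u+1$, so the ``each factor is a square or twice a square'' step, and hence the ``incompatible system of Pell-like equations,'' never materializes. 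Your fallback --- citing a classical result on $X^2-DY^4=\pm1$ --- also does not apply as stated, because $u^2=2j^4-2j^2+1$ is not of that shape; some nontrivial transformation is required before any Ljunggren-type theorem becomes relevant.

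That missing transformation is precisely what the paper supplies. Writing the solutions of $Q^2-2X^2=\pm1$ as $Q_k=(a^k+b^k)/2$, $X_k=(a^k-b^k)/(2\sqrt2)$ with $a=1+\sqrt2$, $b=1-\sqrt2$, the condition $Q_k+1=2q^2$ (for odd $k$) is attacked via the half-index identities $a^k+b^k+a+b=4Q_{\frac{k-1}{2}}Q_{\frac{k+1}{2}}$ when $k\equiv1\pmod4$ and $=8X_{\frac{k-1}{2}}X_{\frac{k+1}{2}}$ when $k\equiv3\pmod4$. Coprimality of consecutive solutions then forces $Q_{\frac{k+1}{2}}$, respectively $X_{\frac{k-1}{2}}$, to be a perfect square, which turns the problem into the two genuine quartic equations $Y^4-2X^2=-1$ (only $Y^2=1$) and the Ljunggren equation $Q^2-2W^4=-1$ (only $W\in\{1,13\}$), after which a short check (e.g.\ $X_8=408$ is not twice a square) eliminates $k=15$ and leaves only $q\in\{1,2\}$. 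Without either this factorization-of-Pell-solutions step or a complete elementary descent for $u^2=2j^4-2j^2+1$, your argument has a genuine gap exactly at the step you yourself flag as delicate; identifying the right equation is not yet a proof that its only positive solutions are $(1,1)$ and $(2,5)$.
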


\begin{proof}
By Corollary \ref{cor:Km_Si}, for  any $m \in \mathbb{N} \setminus (S_1\cap S_2)$, there are infinitely many positive integers $n$, such that there exists a balanced weighting function $f:E(K_n)\to \{-1,1\}$ without a zero-sum $K_m$. We will show that $S_1 \cap S_2=\{1,4\}$. Let $q$ be an integer such that $q^2\in S_1$ (and thus $q^2\in S_1\cap S_2$). Then $q^2$ must be the positive root of equation (\ref{eq:n_k}) for some $x_k$. Thus we need to know for which positive integers $q$ and $x$ the following is possible:
\begin{equation}\label{eq:qx}
q^4-q^2-(2x^2-2x)=0.
\end{equation}
Note that equation (\ref{eq:qx}) can be written as:
\begin{equation}\label{eq:QX}
Q^2-2X^2=-1.
\end{equation}
where $Q=2q^2-1$ and $X=2x-1$.  Again (as in the proof of Theorem \ref{thm:larger_range}), we have to deal with the Diophantine equation $Q^2-2X^2=\pm 1$, which has infinitely many solutions given by
\[Q_k=\frac{a^k+b^k}{2}, \hspace{.5cm} X_{k}=\frac{a^k-b^k}{a-b}=\frac{a^k-b^k}{2\sqrt{2}},\]
where $a=1+\sqrt{2}$ and $b=1-\sqrt{2}$. Since we need to solve equation (\ref{eq:QX}) (that is, with $-1$ on the right side), we know that $k$ must be odd. Therefore, according to the definition of $Q$, we need to determine all odd $k$'s such that $$Q_k+1=2q^2,$$ or equivalently, $$2Q_k+2=4q^2,$$ and so,
\begin{equation}\label{eq:ab}
a^k+b^k+a+b=(2q)^2.
\end{equation}
We consider two cases:\\

\noindent
\emph{Case 1.} If $k\equiv 1$ (mod $4$),  we will prove that the left side of equation (\ref{eq:ab}) is $4Q_{\frac{k-1}{2}}Q_{\frac{k+1}{2}}$. Note that $ab=-1$ and, since in this case $\frac{k-1}{2}$ is even, we have $(ab)^{\frac{k-1}{2}}=(-1)^{\frac{k-1}{2}}=1$. Hence,
\begin{align*}
a^k+b^k+a+b&=a^k+b^k+(ab)^{\frac{k-1}{2}}(a+b)\\
 &=a^{\frac{k-1}{2}}a^{\frac{k+1}{2}}+b^{\frac{k-1}{2}}b^{\frac{k+1}{2}}+a^{\frac{k-1}{2}}b^{\frac{k-1}{2}}(a+b)\\
 &=a^{\frac{k-1}{2}}a^{\frac{k+1}{2}}+b^{\frac{k-1}{2}}b^{\frac{k+1}{2}}+a^{\frac{k+1}{2}}b^{\frac{k-1}{2}}+a^{\frac{k-1}{2}}b^{\frac{k+1}{2}}\\
 &=(a^{\frac{k-1}{2}}+b^{\frac{k-1}{2}})(a^{\frac{k+1}{2}}+b^{\frac{k+1}{2}})\\
 &=4Q_{\frac{k-1}{2}}Q_{\frac{k+1}{2}}.
\end{align*}
Thus, by (\ref{eq:ab}), we conclude that $Q_{\frac{k-1}{2}}Q_{\frac{k+1}{2}}$ is a perfect square. We know that, for all $i$, $Q_i$ and $Q_{i+1}$ are coprimes. Thus, it follows that both $Q_{\frac{k-1}{2}}$ and $Q_{\frac{k+1}{2}}$ are perfect squares. Coming back to equation (\ref{eq:QX}), the following must be satisfied 
\begin{equation}\label{eq:YX}
Y^4-2X^2=-1
\end{equation}
where $Q_{\frac{k+1}{2}}=Y^2$. But, the only possible solution for the Diophantine equation (\ref{eq:YX}) is $(Y,X)=(\pm 1,1)$. Hence, $Q_{\frac{k+1}{2}}= 1$, which means that $k=1$, and so $Q = Q_1 = 1$. Since $Q=2q^2-1$ and $q>0$, we conclude that $q=1$.\\

\noindent
\emph{Case 2.} If $k\equiv 3$ (mod $4$), then we will prove that the left side of equation (\ref{eq:ab}) is $8X_{\frac{k-1}{2}}X_{\frac{k+1}{2}}$. Recall that $ab=-1$ and, since in this case $\frac{k+1}{2}$ is even, we have $(ab)^{\frac{k+1}{2}}=(-1)^{\frac{k+1}{2}}=1$. Hence,
\begin{align*}
a^k+b^k+a+b&=a^k+b^k+(ab)^{\frac{k+1}{2}}(a+b)\\
&=a^k+b^k-(ab)^{\frac{k-1}{2}}(a+b)\\
 &=a^{\frac{k-1}{2}}a^{\frac{k+1}{2}}+b^{\frac{k-1}{2}}b^{\frac{k+1}{2}}-a^{\frac{k-1}{2}}b^{\frac{k-1}{2}}(a+b)\\
 &=a^{\frac{k-1}{2}}a^{\frac{k+1}{2}}+b^{\frac{k-1}{2}}b^{\frac{k+1}{2}}-a^{\frac{k+1}{2}}b^{\frac{k-1}{2}}-a^{\frac{k-1}{2}}b^{\frac{k+1}{2}}\\
 &=(a^{\frac{k-1}{2}}-b^{\frac{k-1}{2}})(a^{\frac{k+1}{2}}-b^{\frac{k+1}{2}})\\
 &=8X_{\frac{k-1}{2}}X_{\frac{k+1}{2}}.
\end{align*}
Thus, by (\ref{eq:ab}), we conclude that $2X_{\frac{k-1}{2}}X_{\frac{k+1}{2}}$ is a perfect square. We know that $X_{\frac{k-1}{2}}$and $X_{\frac{k+1}{2}}$ have different parity.  Observe that, for $k\equiv 3$ (mod $4$), $X_{\frac{k-1}{2}}$ is odd and $X_{\frac{k+1}{2}}$ is even. Since for all $i$, $X_i$ and $X_{i+1}$ are coprimes, also   $X_{\frac{k-1}{2}}$ and $2X_{\frac{k+1}{2}}$ are coprimes, from which it follows that both $X_{\frac{k-1}{2}}$ and $2X_{\frac{k+1}{2}}$ are perfect squares. Particularly, coming back to equation (\ref{eq:QX}), we obtain
\begin{equation}\label{eq:QW}
Q^2-2W^4=-1
\end{equation}
where $X_{\frac{k-1}{2}}=W^2$. Note that equation (\ref{eq:QW}) is the well known Ljunggren Equation $1+Q^2=2W^4$. Such a Diophantine equation has solutions only for $W=1$ and $W=13$, which correspond respectively to $X_1$ and $X_7$ (because $X_1=1=1^2$ and $X_7=169=13^2$). Therefore, we have two possibilities, either $k=3$ (that is $X_{\frac{3-1}{2}}=X_1$), or $k=15$ (that is $X_{\frac{15-1}{2}}=X_7$). The second case is disclaimed since  $X_{\frac{15+1}{2}}=X_8=408=2 \cdot (204)$ and $204$ is not a perfect square. The first case, corresponding to $k=3$, leads to $X_{\frac{3-1}{2}}=X_1=1$ and $X_{\frac{3+1}{2}}=X_2=2$. The solution $(Q_1,X_1)=(1,1)$ gives $q=1$ as we saw in Case 1. The solution $(Q_2,X_2)=(3,2)$ gives  $q=2$ (since $Q=2q^2-1$ an $q>0$). 

From both cases we conclude that, if $q^2\in S_1\cap S_2$ then either $q=1$ or $q=2$. Hence, $S_1\cap S_2=\{1,4\}$ which concludes the proof.
\end{proof}

\section{Conclusions}

While the situation about zero-sum copies of $K_m$ over $\Z$-weightings is fairly clear now, 
a lot of interesting results can be proved when the graphs in question are not complete graphs. 
Several examples are given in \cite{CY3} (for example, certain complete bipartite graphs and many more), and in a forthcoming paper \cite{CHM} which is under preparation.

\section{Acknowledgements}

We would like to thank our colleague Florian Lucca for some fruitful discussions concerning some results of this paper.


The second author was partially supported by PAPIIT IA103217 and CONACyT project 219775. The third author was partially supported by PAPIIT IN114016 and CONACyT project 219827. Finally, we would like to acknowledge the support from Center of Innovation in Mathematics, CINNMA A.C.

\end{document}